\newtheorem{theorem}{Theorem}
\theoremstyle{plain}
\newtheorem{lemma}{Lemma}
\numberwithin{equation}{section}
\begin{document}
\title[self-similar wreath products]{On self-similarity of wreath products
of abelian groups}
\author{Alex C. Dantas}
\address{Universidade Tecnol\'{o}gica Federal do Paran\'{a}, Guarapuava-PR
85053-525, Brasil}
\email{alexcdan@gmail.com}
\author{Said N. Sidki}
\address{Departamento de Matematica, Universidade de Bras\'{\i}lia\\
Bras\'{\i}lia DF 70910-900, Brazil}
\email{ssidki@gmail.com}
\date{January 18, 2017}
\subjclass[2000]{ Primary 20E08, 20F18}
\keywords{Automorphisms of trees, state-closed groups, self-similar groups.\\
We thank the referee for suggestions which improved the original text.}

\begin{abstract}
We prove that in a self-similar wreath product of abelian groups $G=BwrX$,
if $X$ is torsion-free then $B$ is torsion of finite exponent. Therefore, in
particular, the group $\mathbb{Z}wr\mathbb{Z}$ cannot be self-similar
Furthemore, we prove that if $L$ is a self-similar abelian group then $%
L^{\omega }wrC_{2}$ is also self-similar.
\end{abstract}

\maketitle

\section{Introduction}

A group $G$ is self-similar provided for some finite positive integer $m$,
the group has a faithful representation on an infinite regular one-rooted $m$%
-tree~$\mathcal{T}_{m}$ such that the representation is state-closed and is
transitive on the tree's first level. If a group $G$ does not admit such a
representation for any $m$ then we say $G$ is not self-similar. In
determining that a group is not self-similar we will use the language of
virtual endomorphisms of groups. More precisely, a group $G$ is not
self-similar if and only if for any subgroup $H$ of $G$ of finite index and
any homomorphism $f:H\rightarrow G$ there exists a non-trivial subgroup $K$
of $H$ which is normal in $G$ and is $f$-invariant (in the sense $K^{f}\leq
K $).

Which groups admit faithful self-similar representation is an on going topic
of investigation. The first in depth study of this question was undertaken
in \cite{NekSid} and then in book form in \cite{Nek}. Faithful self-similar
representations are known for many individual finitely generated groups
ranging from the torsion groups of Grigorchuk and Gupta-Sidki to free
groups. Such representations have been also studied for the family of
abelian groups \cite{BruSid}, of finitely generated nilpotent groups \cite%
{BerSid}, as well as for arithmetic groups \cite{Kapo}. See \cite{DanSid}
for further references.

One class which has received attention in recent years is that of wreath
products of abelian groups $G=BwrX$, such as the classical lamplighter group 
\cite{GrigZuk} in which $B$ is cyclic of order $2$ and $X$ is infinite
cyclic. The more general class $G=C_{p}wrX$ where $C_{p}$ is cyclic of prime
order $p$ and $X$ is free abelian of rank $d\geq 1$ was the subject of \cite%
{DanSid} where self-similar groups of this type are constructed for every
finite rank $d$.

We show in this paper that the properly of $B$ being torsion is necessary to
guarantee self-similarity of $G$. More precisely, we prove

\begin{theorem}
Let $G=BwrX$ be a self-similar wreath product of abelian groups. If $X$ is
torsion free then $B$ is a torsion group of finite exponent. In particular, $%
\mathbb{Z}wr\mathbb{Z}$ cannot be self-similar.
\end{theorem}

Observe that though $G=\mathbb{Z}wr\mathbb{Z}$ is not self-similar, it has a
faithful finite-state representation on the binary tree \cite{Sidki}.

Next we produce a novel embedding of self-similar abelian groups into
self-similar wreath products having higher cardinality.

\begin{theorem}
Let $L$ be a self-similar abelian group and $L^{\omega }$ an infinite
countable direct sum of copies of $L$. Then $L^{\omega }wrC_{2}$ is also
self-similar.
\end{theorem}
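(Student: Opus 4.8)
The plan is to invoke the virtual endomorphism criterion recalled in the introduction: $M:=L^{\omega }wrC_{2}$ is self-similar as soon as we exhibit a subgroup $\tilde{H}$ of finite index in $M$ together with a homomorphism $\tilde{f}:\tilde{H}\rightarrow M$ whose core is trivial, i.e. such that no nontrivial $K\leq \tilde{H}$ with $K\trianglelefteq M$ satisfies $K^{\tilde{f}}\leq K$. Write $B=L^{\omega }=\bigoplus_{n\geq 0}L$, viewed as finitely supported sequences $(l_{0},l_{1},\dots )$, and $M=(B_{L}\oplus B_{R})\rtimes \langle \tau \rangle $, where $B_{L},B_{R}$ are two copies of $B$ and the involution $\tau $ interchanges them. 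Self-similarity of the abelian group $L$ provides, in the same language, a subgroup $H_{0}\leq L$ of finite index $m$ and a homomorphism $f_{0}:H_{0}\rightarrow L$ with trivial core. I first record that $\ker f_{0}$ is $f_{0}$-invariant and normal, so triviality of the core forces $f_{0}$ to be \emph{injective}; this will be used below.

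The construction rests on the shift isomorphism $B\cong L\oplus B$ together with the swap $\tau $. I take $\tilde{H}=\{(u,v)\in B_{L}\oplus B_{R}:v_{0}\in H_{0}\}$, of index $m$ in the base group and hence of index $2m$ in $M$, and define $\tilde{f}:\tilde{H}\rightarrow M$ by
\[
\tilde{f}(u,v)=\big((f_{0}(v_{0}),u_{0},u_{1},\dots ),\ (v_{1},v_{2},\dots )\big),
\]
so that on the left coordinate $\tilde{f}$ shifts $u$ up by one slot and inserts $f_{0}(v_{0})$ in the vacated slot $0$, while on the right coordinate it is the pure down-shift of $v$. This is a homomorphism into the base $B_{L}\oplus B_{R}\leq M$, and it is injective: if $\tilde{f}(u,v)=1$ then the down-shift forces $v=(v_{0},0,0,\dots )$, the up-shift forces $u=0$ and $f_{0}(v_{0})=0$, whence $v_{0}=0$ by injectivity of $f_{0}$. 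The degree $2m$ of the resulting tree reflects the two mechanisms at work: the factor $2$ comes from the swap $\tau $ acting on the two halves of the first level (this is where $C_{2}$ enters), while the factor $m$ comes from the self-similar datum of $L$.

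I would then verify the trivial-core condition. A subgroup $K\leq \tilde{H}$ normal in $M$ is exactly a $\tau $-invariant subgroup of the base (necessarily with both slot-$0$ coordinates in $H_{0}$); in particular its projections coincide, $\pi _{L}(K)=\pi _{R}(K)=:W\leq B$. Since the right coordinate of $\tilde{f}$ is the down-shift $\partial $, the invariance $\tilde{f}(K)\leq K$ gives $\partial W\leq W$; iterating $\partial $ and using that elements of $B$ have finite support shows that the top coefficient of every $w\in W$ lies, as a slot-$0$ element $(c,0,0,\dots )$, in $W$, and feeding such an element through $\tilde{f}$ returns, in the left coordinate, its value under $f_{0}$. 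Combining this with the $\tau $-symmetry—which, through the up-shift present in the left coordinate, should also force $W$ to be up-shift invariant—one pins $W$ down to the constant form $\bigoplus_{n}D$ for a single $D\leq H_{0}$, and the slot-$0$ constraint becomes $f_{0}(D)\leq D$. Triviality of the core of $f_{0}$ then yields $D=0$, hence $W=0$ and $K=0$. As the action of $M$ on the cosets $M/\tilde{H}$ is automatically transitive on the first level, this establishes self-similarity of $M$.

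The main obstacle is exactly this core computation. The shift on its own has abundant invariant subgroups (for example $\bigoplus_{n\geq e}L$ is down-shift invariant on a single side), so the argument genuinely requires the $C_{2}$-coupling: the swap $\tau $ rules out one-sided invariant subgroups and equalizes the two projections, after which the competition between the up-shift and the down-shift is meant to collapse $W$ to the constant form. The delicate point is that a general normal $\tilde{f}$-invariant $K$ need not be a direct product $W\oplus W$, but only a $\tau $-invariant subdirect subgroup of $W\oplus W$; reducing to the product case demands a Goursat-type analysis combined with the finiteness of supports. Verifying that no such subdirect subgroup can persist—equivalently, that the escape-to-infinity produced by the shift cannot be balanced against the finite descent produced by $f_{0}$—is where both the injectivity of $f_{0}$ and the triviality of its core must be brought to bear, and it is the step I expect to require the most care.
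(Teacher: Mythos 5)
Your construction of the pair $(\tilde{H},\tilde{f})$ is sound and genuinely different from the paper's: the paper restricts the first slot of one copy of $B$ to the domain $M\leq L$ of the virtual endomorphism $\phi$ of $L$ and uses an \emph{uncoupled} map (the $\phi$-twisted down-shift $(\beta_{11},\beta_{12},\beta_{13},\dots)\mapsto(\beta_{11}^{\phi}\beta_{12},\beta_{13},\dots)$ on the restricted copy, a transposition of the first two slots on the other copy), whereas you cross-couple the two copies of $B$ through $f_{0}(v_{0})$ and an up-shift. Your $\tilde{H}$ has the correct index $2m$ and your $\tilde{f}$ is a well-defined injective homomorphism. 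The problem is that the trivial-core verification---which is the entire content of the theorem and is what the paper's whole proof is devoted to---is left as a sketch, and its intermediate claims do not hold as stated. Concretely: (i) $\tilde{f}(K)\leq K$ does \emph{not} give up-shift invariance of $W=\pi_{L}(K)$, because the left output of $\tilde{f}$ is not the up-shift $\sigma u$ but $\sigma u$ perturbed in slot $0$ by $f_{0}(v_{0})$, and inside $W$ you cannot separate these two contributions without already controlling slot-$0$ entries; (ii) the reduction of the subdirect $K$ to the product form $W\oplus W$ (your ``Goursat-type analysis'') is never carried out; (iii) consequently the conclusions ``$W=\bigoplus_{n}D$'' and ``$f_{0}(D)\leq D$'' are asserted, not derived. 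As written, the proof is incomplete exactly at its crux.

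The gap is repairable, and in fact your construction admits a shorter argument than the one you sketch: work with slot-$0$ projections of $K$ itself rather than with the full projections $\pi_{L}(K)$, $\pi_{R}(K)$. Let $R=\{a_{0}\in L : (a,b)\in K\}$ be the set of left slot-$0$ entries of elements of $K$; it is a subgroup of $L$ (the image of $K$ under a homomorphism), and $\tau$-invariance of $K$ shows it also equals the set of right slot-$0$ entries, so $K\leq\tilde{H}$ forces $R\leq H_{0}$. Applying $\tilde{f}$ to $(a,b)\in K$ and reading off slot $0$ of the left output gives $f_{0}(b_{0})\in R$; as $b_{0}$ runs over all of $R$, this yields $f_{0}(R)\leq R$. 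Since $L$ is abelian, $R$ is normal in $L$, so triviality of the core of $f_{0}$ forces $R=1$. Now every $(a,b)\in K$ has $a_{0}=b_{0}=1$; since the right coordinate of $\tilde{f}^{k}(a,b)$ is $(b_{k},b_{k+1},\dots)$, its slot-$0$ entry $b_{k}$ lies in $R=1$ for every $k$, whence $b=1$, and then $\tau$-invariance and the same iteration give $a=1$, i.e. $K=1$. This step plays the role of the paper's minimal-degree argument, which instead funnels the invariance of $K$ into the $\phi$-invariant subgroup $\langle \kappa_{11}^{\phi^{i}}\mid i\geq 0\rangle$ of the paper's $M\leq L$ and contradicts simplicity there; both routes ultimately transport invariance of $K$ down to an invariant subgroup of the similarity pair of $L$, but yours, once completed this way, is arguably the cleaner of the two.
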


\section{Preliminaries}

We recall a number of notions of groups acting on trees and of virtual
endomorphisms of groups from \cite{BruSid}.

\subsection{State-closed groups}

1. Automorphisms of one-rooted regular trees $\mathcal{T}\left( Y\right) $
indexed by finite sequences from a finite set $Y$ of size $m\geq 2$, have a
natural interpretation as automata on the alphabet $Y$, and with states
which are again automorphisms of the tree. A subgroup $G$ of the group of
automorphisms $\mathcal{A}\left( Y\right) $ of the tree is said to have
degree $m$. Moreover, $G$ is \textit{state-closed }of degree $m$ provided
the states of its elements are themselves elements of the same group.

2. Given an automorphism group $G$ of the tree, $v$ a vertex of the tree and 
$l$ a level of the tree, we let $Fix_{G}(v)$ denote the subgroup of $G$
formed by its elements which fix $v$ and let $Stab_{G}(l)$ denote the
subgroup of $G$ formed by elements which fix all $v$ of level $l$. Also, let 
$P\left( G\right) $ denote the permutation group induced by $G$ on the first
level of the tree. We say $G$ is \textit{transitive} provided $P\left(
G\right) $ is transitive.

3. A group $G$ is said to be \textit{self-similar} provided it is a
state-closed and transitive subgroup of $\mathcal{A}\left( Y\right) $ for
some finite set $Y$.

\subsection{Virtual endomorphisms}

1. Let $G$ be a group with a subgroup $H$ of finite index $m$. A
homomorphism $f:H\rightarrow G$ is called a \textit{virtual endomorphism }of 
$G$ and $\left( G,H,f\right) $ is called a \textit{similarity triple}; if $G$
is fixed then $\left( H,f\right) $ is called a \textit{similarity pair}.

2. Let $G$ be a transitive state-closed subgroup of $\mathcal{A}\left(
Y\right) $ where $Y=\left\{ 1,2,...,m\right\} $. Then the index $\left[
G:Fix_{G}\left( 1\right) \right] =m$ and the projection on the $1$st
coordinate of $Fix_{G}\left( 1\right) $ produces a subgroup of $G$; that is, 
$\pi _{1}:Fix_{G}\left( 1\right) \rightarrow G$ is a virtual endomorphism of 
$G$.

3. Let $G$ be a group with a subgroup $H$ of finite index $m$ and a
homomorphism $f:H\rightarrow G$. If $U\leq H$ and $U^{f}\leq U$ then $U$ is
called $f$-\textit{invariant. }The largest subgroup $K$ of $H$, which is
normal in $G$ and is $f$-invariant is called the $f$-core$\left( H\right) $.
If the $f$-core$\left( H\right) $ is trivial then $f$ and the triple $\left(
G,H,f\right) $ are called \textit{simple.}

4. Given a triple $\left( G,H,f\right) $ and a right transversal $L=\left\{
x_{1},x_{2},...,x_{m}\right\} $ of $H$ in $G$, the permutational
representation $\pi :G\rightarrow Perm\left( 1,2,...,m\right) $ is $g^{\pi
}:i\rightarrow j$ which is induced from the right multiplication $%
Hx_{i}g=Hx_{j}$. Generalizing the Kalujnine-Krasner procedure \cite{Robinson}
, we produce recursively a representation $\varphi :G\rightarrow $ $\mathcal{%
A}\left( Y\right) $, defined by 
\begin{equation*}
g^{\varphi }=\left( \left( x_{i}g.\left( x_{\left( i\right) g^{\pi }}\right)
^{-1}\right) ^{f\varphi }\right) _{1\leq i\leq m}g^{\pi }\text{,}
\end{equation*}%
seen as an element of an infinitely iterated wreath product of $Perm\left(
1,2,...,m\right) $. The kernel of $\varphi $ is precisely the $f$-core$%
\left( H\right) $ and $G^{\varphi }$ is state-closed and transitive and $%
H^{\varphi }=Fix_{G^{\varphi }}(1)$.

\begin{lemma}
A group $G$ is self-similar if and only if there exists a simple similarity
pair $\left( H,f\right) $ for $G$.
\end{lemma}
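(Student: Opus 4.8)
The plan is to prove the two implications separately, drawing on the explicit construction of $\varphi$ recorded in §2.2(4), whose stated properties I take as given.

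For the implication that a simple similarity pair yields self-similarity, I would argue as follows. Starting from a simple pair $(H,f)$, fix a right transversal $L=\{x_{1},\dots,x_{m}\}$ of $H$ in $G$ and form the representation $\varphi:G\rightarrow\mathcal{A}(Y)$ of §2.2(4). By the properties recorded there, $G^{\varphi}$ is state-closed and transitive on the first level, and $\ker\varphi$ equals the $f$-core$(H)$. Since $(H,f)$ is simple, the $f$-core$(H)$ is trivial, so $\varphi$ is faithful. Hence $G\cong G^{\varphi}$ is a faithful, state-closed, transitive subgroup of $\mathcal{A}(Y)$, i.e.\ $G$ is self-similar. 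This direction is essentially immediate once the cited properties of $\varphi$ are in hand.

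For the converse, suppose $G$ is self-similar and realize it as a transitive state-closed subgroup of $\mathcal{A}(Y)$ with $Y=\{1,\dots,m\}$. Put $H=Fix_{G}(1)$, an index-$m$ subgroup, and let $f=\pi_{1}:Fix_{G}(1)\rightarrow G$ be the projection onto the first coordinate as in §2.2(2); then $(H,f)$ is a similarity pair and it remains to check simplicity. Let $K$ be the $f$-core$(H)$, so $K\trianglelefteq G$, $K\leq Fix_{G}(1)$, and $K^{\pi_{1}}\leq K$. I claim $K$ fixes every vertex of the tree, whence $K=1$ by faithfulness. I would prove $K\leq Stab_{G}(l)$ for all levels $l$ by induction. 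For $l=1$: since $K\trianglelefteq G$ and $K\leq Fix_{G}(1)$, conjugating by a transitive set of elements of $G$ carries vertex $1$ to every first-level vertex while fixing $K$ setwise, so $K\leq Stab_{G}(1)$. For the inductive step the key point is that every first-level state of an element of $K$ again lies in $K$: the state at vertex $1$ is $\pi_{1}(k)\in K^{\pi_{1}}\leq K$ by $f$-invariance, and the states at the remaining vertices are obtained by transporting this one via the normality of $K$ together with transitivity of $G$ on level $1$, so they too lie in $K$. Granting this, if $K\leq Stab_{G}(l)$ then each first-level state of $k\in K$ lies in $K\leq Stab_{G}(l)$, and hence $k$ fixes level $l+1$; thus $K\leq Stab_{G}(l+1)$.

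The main obstacle is precisely this inductive descent in the converse: I must make the transport of states under conjugation precise, i.e.\ verify that for $k\in K$ the state at an arbitrary first-level vertex $i$ coincides, up to the state-closed structure, with the vertex-$1$ state of a suitable conjugate $k^{g}\in K$, where $g$ realizes the transitivity by sending vertex $1$ to vertex $i$. This amounts to the standard formula for the states of a conjugate in a state-closed group, and once it is written out the containment $\pi_{i}(K)\leq K$ follows from $\pi_{1}(K)\leq K$ together with $K\trianglelefteq G$. With all first-level states of $K$ confined to $K$, the induction runs cleanly and faithfulness forces $K=1$, establishing that $(H,f)$ is simple.
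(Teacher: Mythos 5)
Your proposal is correct, but it is worth noting that the paper gives no proof of this lemma at all: it is stated as a summary of the facts recorded in \S 2.2(2) and \S 2.2(4), the result itself going back to \cite{NekSid} and \cite{BruSid}. Your forward direction is exactly what the paper's machinery yields instantly: the representation $\varphi$ of \S 2.2(4) has kernel equal to the $f$-core$(H)$, so simplicity gives faithfulness of $\varphi$ and $G^{\varphi}$ is state-closed and transitive. For the converse, the paper's implicit route would be to identify the original faithful embedding of $G$ into $\mathcal{A}(Y)$ with the representation $\varphi$ attached to the pair $(Fix_{G}(1),\pi_{1})$ (choosing the transversal $x_{i}$ with $1^{x_{i}}=i$), so that the kernel statement of \S 2.2(4) forces the $\pi_{1}$-core to be trivial; you instead prove core-triviality directly, by inducting on levels to show the core $K$ lies in $Stab_{G}(l)$ for every $l$ and then invoking faithfulness. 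Your argument is sound, and the step you flag as the main obstacle does go through: for $k\in K$ and $g\in G$ carrying vertex $1$ to vertex $i$, the state of $k$ at $i$ equals $s^{-1}us$, where $u$ is the state at vertex $1$ of the conjugate $k^{g^{-1}}\in K$ (so $u\in K^{\pi_{1}}\leq K$) and $s$ is the state of $g$ at vertex $1$, which lies in $G$ by state-closure; normality of $K$ then puts $s^{-1}us$ back in $K$, confining all first-level states of $K$ to $K$, which is precisely what the induction needs. The trade-off between the two routes: yours is self-contained and avoids the identification of the original representation with the constructed $\varphi$ (an identification the paper never states), at the cost of writing out the conjugation formula for states; the paper's route gets both directions from the single kernel computation of \S 2.2(4) but leans on that unproved identification.
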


\section{Proof of Theorem 1}

We recall $B$, $X$ are abelian groups, $X$ is a torsion-free group and $%
G=BwrX$ . Denote the normal closure of $B$ in $G$ by $A=B^{G}$ . Let $\left(
H,f\right) $ be the similarity pair with respect to which $G$ is
self-similar and let $\left[ G:H\right] $ $=m$. Define 
\begin{equation*}
A_{0}=A\cap H,\text{ }L=\left( A_{0}\right) ^{f}\cap A,\text{ }Y=X\cap (AH) 
\text{.}
\end{equation*}
Note that if $x\in X$ is nontrivial then the centralizer $C_{A}\left(
x\right) \ $is trivial. We develop the proof in four lemmas.

\begin{lemma}
\textbf{\ }Either $B^{m}\ $is trivial or $\left( A_{0}\right) ^{f}\leq A$.
In both cases $A\not=A_{0}$.
\end{lemma}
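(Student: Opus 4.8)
The plan is to establish the dichotomy in the contrapositive form ``$(A_0)^f\not\le A\Rightarrow B^m=1$'', and then to deduce $A\neq A_0$ in each of the two cases. Write $\pi\colon H\to X$ for the composite of $f$ with the canonical projection $G\to G/A\cong X$, so that $\pi|_{A_0}$ records the $X$-part of $f$ on $A_0$. Two elementary facts will be used repeatedly: since $A/A_0$ is a finite abelian group whose exponent divides $m$, we have $A^m\le A_0$; and $X_0:=X\cap H$ has index dividing $m$ in $X$, hence is nontrivial (indeed infinite) whenever $X$ is.

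First I would extract the consequence of the centralizer hypothesis. Assume $(A_0)^f\not\le A$ and choose $a\in A_0$ with $\pi(a)=x_0\neq 1$. Since $A_0$ is abelian, so is $(A_0)^f$; moreover any element of $G$ projecting to $x_0$ acts on the normal abelian subgroup $A$ as the shift by $x_0$. Hence if $w\in A_0$ satisfies $\pi(w)=1$, then $w^f\in A$ commutes with $a^f$ and is therefore fixed by the shift $x_0$, i.e. $w^f\in C_A(x_0)=1$. This shows $A_0\cap\ker\pi\le\ker f$, and is the only place where the centralizer hypothesis enters.

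The decisive step — and the one I expect to be the main obstacle — is to promote this observation to a genuinely $G$-normal, $f$-invariant subgroup. The key is that $\pi$ is invariant under conjugation by $X_0$: for $s\in X_0$ and $w\in A_0$ both lie in $H$, so $f(s w s^{-1})=f(s)\,w^f\,f(s)^{-1}$, and since $X$ is abelian this conjugation does not change the $X$-part, giving $\pi(s w s^{-1})=\pi(w)$. Now suppose $B^m\neq 1$ and fix $b\in B$ with $b_0:=b^m\neq 1$; then $b_0$ and all its translates ${}^{t}b_0$ $(t\in X)$ lie in $A^m\le A_0$. Using that $X$ is abelian, for $s\in X_0$ and any $t\in X$ we get $\pi({}^{ts}b_0)=\pi({}^{s}({}^{t}b_0))=\pi({}^{t}b_0)$, so each element ${}^{ts}b_0\,({}^{t}b_0)^{-1}$ lies in $A^m\le A_0$ and has trivial $\pi$-image, hence lies in $A_0\cap\ker\pi\le\ker f$. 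Let $W$ be the subgroup they generate as $s$ ranges over $X_0$ and $t$ over $X$. Then $W\le A^m\le A_0\le H$; $W$ is contained in $\ker f$, so $W^f=1$ and $W$ is $f$-invariant; $W$ is visibly stable under $X$ and lies in the abelian normal subgroup $A$, hence $W\trianglelefteq G$; and $W\neq 1$ because ${}^{s}b_0\neq b_0$ for $1\neq s\in X_0$. This nontrivial $G$-normal $f$-invariant subgroup of $H$ contradicts the simplicity of $(H,f)$; therefore $B^m=1$.

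It remains to see that $A\neq A_0$ in both cases. If $A=A_0$ then $A\le H$ and $\pi$ is defined on all of $A$. When $B^m=1$ the group $A$ is torsion of exponent dividing $m$, so $\pi(A)\le X$ is torsion and hence trivial, giving $A^f\le A$; when $(A_0)^f\le A$ we have $A^f=(A_0)^f\le A$ directly. In either case $A$ is a nontrivial (as $B\neq 1$) $G$-normal $f$-invariant subgroup contained in $H$, so $A\le f\text{-core}(H)=1$, a contradiction. I note that the naive attempt to contradict simplicity using $A_0\cap\ker\pi$ fails because that subgroup need not be $X$-invariant; the $X_0$-conjugation-invariance of $\pi$ is exactly what repairs this and produces the $G$-normal subgroup $W$.
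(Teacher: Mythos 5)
Your proof is correct, and while it rests on the same two pillars as the paper's argument --- the triviality of $C_{A}(x)$ for $1\neq x\in X$, and simplicity applied to a $G$-normal subgroup inside $\ker f$ --- its execution is genuinely different, and in one respect tighter. The paper splits on $L=(A_{0})^{f}\cap A$: if $L\neq 1$, then $L$ is central in $M=A(A_{0})^{f}$, so trivial centralizers force $X\cap M=1$ and hence $(A_{0})^{f}\leq A$; if $L=1$, the paper shows $[A^{m},X^{m}]$ is normal in $G$ and mapped by $f$ into $(A_{0})^{f}\cap G^{\prime}\leq L=1$, whence $[A^{m},X^{m}]=1$ by simplicity and then $B^{m}=1$ by the centralizer property. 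Your first step is exactly the contrapositive of the paper's centrality argument: your witness $a^{f}$ with nontrivial $X$-part shows every element of $L$ lies in $C_{A}(x_{0})=1$, i.e.\ $A_{0}\cap\ker\pi\leq\ker f$. Your $W$, which is essentially $[\langle b^{m}\rangle^{G},\,X\cap H]$, then plays the role of the paper's $[A^{m},X^{m}]$. The substantive difference is which elements of $X$ you commutate against: the paper takes $X^{m}$ and asserts $X^{m}\leq H$, a claim that is not automatic for a non-normal subgroup of index $m$ (index-$m$ subgroups need not contain $m$-th powers), whereas you take $X_{0}=X\cap H$, which lies in $H$ by definition and is nontrivial because $[X:X_{0}]\leq m$ and $X$ is torsion-free and nontrivial; your $X_{0}$-conjugation-invariance of $\pi$ then replaces the paper's identity $f([A^{m},X^{m}])=[(A^{m})^{f},(X^{m})^{f}]$, which only makes sense when $X^{m}\leq H$. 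So your route buys a proof that does not depend on the claim $X^{m}\leq H$, at the cost of a somewhat longer construction; your closing remark about why the naive kernel $A_{0}\cap\ker\pi$ must be replaced by the $X$-stable subgroup $W$ identifies precisely the point the commutator construction is designed to handle. Two minor blemishes: your assertion that $[X:X\cap H]$ \emph{divides} $m$ is of the same questionable kind as the paper's claim --- only $[X:X\cap H]\leq m$ is guaranteed, and that is all you actually use; and your final step, like the paper's, tacitly assumes $B\neq 1$ and $X\neq 1$, which is implicit in the wreath-product setting.
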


\begin{proof}
\textit{We have }$A^{m}\leq A_{0}$\textit{\ and }$X^{m}\leq H$\textit{. As }$%
A$ is normal abelian and $X$ is abelian, \textit{\ } 
\begin{eqnarray*}
&&\left[ A^{m},X^{m}\right] \vartriangleleft G, \\
\left[ A^{m},X^{m}\right] &\leq &\left[ A_{0},X^{m}\right] \leq A_{0}\text{.}
\end{eqnarray*}%
\textit{Also, } 
\begin{eqnarray*}
f &:&\left[ A^{m},X^{m}\right] \rightarrow \left[ \left( A^{m}\right)
^{f},\left( X^{m}\right) ^{f}\right] \leq \left( A_{0}\right) ^{f}\cap
G^{\prime } \\
&\leq &\left( A_{0}\right) ^{f}\cap A\text{ }=L\text{.}
\end{eqnarray*}%
\textit{(1) If }$L$\textit{\ is trivial then }$\left[ A^{m},X^{m}\right]
\leq \ker \left( f\right) $\textit{. Since }$f$\textit{\ is simple, it
follows that }$\ker \left( f\right) =1$\textit{\ and }$\left[ A^{m},X^{m}%
\right] =1=\left[ B^{m},X^{m}\right] $\textit{. As }$X^{m}$\textit{\ }$%
\not=1 $\textit{, we conclude }$A^{m}=1=B^{m}$\textit{. (2) If }$L$\textit{\
is nontrivial then }$L$\textit{\ is central in }$M=A\left( A_{0}\right)
^{f}=A\left( X\cap M\right) $\textit{\ which implies }$X\cap M=1$\textit{\
and }$\left( A_{0}\right) ^{f}\leq A$\textit{. (3) If }$B$ is a torsion
group then $tor(G)=A$; clearly, $\left( A_{0}\right) ^{f}\leq A$ and $%
A\not=A_{0}$.
\end{proof}

Let $G$ be a counterexample; that is, $B$ has infinite exponent. By the
previous lemma $\left( A_{0}\right) ^{f}\leq A$ and so we may use
Proposition 1 of \cite{DanSid} to replace the simple similarity pair $\left(
H,f\right) $ by a simple pair $\left( \dot{H},\dot{f}\right) $ where $\dot{H}%
=A_{0}Y$ ($Y\leq X$) and $\left( Y\right) ^{\dot{f}}\leq X$. In other words,
we may assume $\left( Y\right) ^{f}\leq X$.

\begin{lemma}
If $z\in X$ is nontrivial and $x_{1},...,x_{t},z_{1},...,z_{l}\in X$, then
there exists an integer $k$ such that 
\begin{equation*}
z^{k}\{z_{1},...,z_{l}\}\cap \{x_{1},...,x_{t}\}=\emptyset .
\end{equation*}
\end{lemma}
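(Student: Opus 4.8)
The plan is to exploit the fact that $X$ is torsion-free, so that the nontrivial element $z$ has infinite order and its powers $z^k$ are pairwise distinct. The intersection in the statement is nonempty precisely when $z^k z_i = x_j$ for some indices $i \in \{1,\ldots,l\}$ and $j \in \{1,\ldots,t\}$, so the strategy is to count how many values of $k$ can make this happen and show that only finitely many do. Since there are infinitely many integers to choose from, a good $k$ must exist.

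First I would rewrite the failure condition. For fixed $i$ and $j$, the equality $z^k z_i = x_j$ is equivalent to $z^k = x_j z_i^{-1}$, an equation in $X$ whose right-hand side does not depend on $k$. Because $z$ has infinite order, the map $k \mapsto z^k$ from $\mathbb{Z}$ into $X$ is injective; hence for each such pair $(i,j)$ the equation $z^k = x_j z_i^{-1}$ admits at most one integer solution $k$. This is the one genuinely substantive observation, and it is exactly where torsion-freeness of $X$ (equivalently, $z$ not being a torsion element) is used: in a group with torsion one could have $z^k = z^{k'}$ for $k \neq k'$, and the finiteness count below would collapse.

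Next I would collect the bad values. Letting $B$ denote the set of integers $k$ for which $z^k\{z_1,\ldots,z_l\} \cap \{x_1,\ldots,x_t\} \neq \emptyset$, the previous step shows that $B$ is contained in the union, over the $t\cdot l$ pairs $(i,j)$, of the solution sets of $z^k = x_j z_i^{-1}$, each of which has at most one element. Therefore $|B| \leq t l < \infty$. Since $\mathbb{Z}$ is infinite, I may pick any integer $k \notin B$, and for this $k$ the desired disjointness $z^k\{z_1,\ldots,z_l\} \cap \{x_1,\ldots,x_t\} = \emptyset$ holds.

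I do not expect a serious obstacle here: once the injectivity of $k \mapsto z^k$ is in hand, the argument is a finite-versus-infinite counting step. The only point requiring care is to make sure the reduction to $z^k = x_j z_i^{-1}$ is stated for all pairs $(i,j)$ simultaneously and that the total bad set is bounded by $tl$, so that its complement in $\mathbb{Z}$ is nonempty.
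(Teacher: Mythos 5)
Your proof is correct and is essentially the same argument as the paper's: both exploit that $z$ has infinite order (torsion-freeness of $X$) to conclude that each equation $z^k z_i = x_j$ has at most one solution in $k$, so the set of bad integers is finite and any $k$ outside it works. The paper merely organizes the count per index $j$ and phrases the injectivity step as a contradiction ($k_1 \neq k_2$ would force $z^{k_1-k_2}=1$), while you phrase it per pair $(i,j)$ with the explicit bound $tl$; this is a cosmetic difference only.
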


\begin{proof}
\textit{Note that the set }$\{k\in \mathbb{Z}\,|\,\{z^{k}z_{j}\}\cap
\{x_{1},...,x_{t}\}\not=\emptyset \}$\textit{\ is finite, for each }$%
j=1,...,l$\textit{. Indeed, otherwise there exist }$k_{1}\not=k_{2}$\textit{%
\ such that }$z^{k_{1}-k_{2}}=1$\textit{, a contradiction.}
\end{proof}

\begin{lemma}
If $x\in X$ is nontrivial, then $\left( x^{m}\right) ^{f}$ is nontrivial.
\end{lemma}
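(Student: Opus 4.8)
The plan is to prove the contrapositive-style statement by examining what happens if $\left(x^m\right)^f$ were trivial for some nontrivial $x\in X$. Recall from the setup that $f:H\to G$ is simple (its $f$-core is trivial), and after the reduction we may assume $H=A_0Y$ with $(Y)^f\le X$ and $(A_0)^f\le A$. The key structural facts I would invoke are that $A=B^G$ is abelian and normal in $G$, that the centralizer $C_A(x)$ is trivial for every nontrivial $x\in X$ (since $X$ acts on $A$ with no fixed points), and that $X$ is torsion-free so powers $x^m$ are nontrivial whenever $x$ is.

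First I would observe that $x^m\in X^m\le H$, so $\left(x^m\right)^f$ is defined; suppose toward a contradiction it equals $1$. The idea is to build from $x^m$ a nontrivial $f$-invariant subgroup that is normal in $G$, contradicting simplicity. Since $A$ is normal and abelian, conjugation by $x$ (or by $x^m$) is an automorphism of $A$, and I would consider the subgroup generated by commutators $[a, x^m]$ for $a\in A$, or more precisely track where $f$ sends the cyclic subgroup $\langle x^m\rangle$ together with its interaction with $A$. The central point is that if $\left(x^m\right)^f=1$, then $\langle x^m\rangle^f=1$, so $\langle x^m\rangle\le \ker(f)$; because $f$ is simple, $\ker(f)$ cannot contain a nontrivial subgroup that is normal in $G$. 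I would therefore compute the normal closure $\langle x^m\rangle^G$ and show that a suitable $f$-invariant piece of it lands in $\ker(f)$, forcing it into the $f$-core.

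The mechanism I expect to use is the commutator relation: $A$ is normalized by $X$, and $\left[A, x^m\right]$ is a normal abelian subgroup of $G$ contained in $A$. Applying $f$ and using that $f$ is a homomorphism on $H$ (with $x^m$ acting via its image in $X$), I would try to show that $\left[A_0, x^m\right]^f$ is controlled, and that the triviality of $\left(x^m\right)^f$ propagates to give an $f$-invariant normal subgroup. Here is where Lemma~3 (the combinatorial displacement lemma) should enter: it guarantees I can choose supports of elements of $A$ (viewed as finitely supported functions $X\to B$) so that translates by powers of $x$ avoid a prescribed finite set, which is exactly what is needed to produce elements that are genuinely moved and hence detect nontriviality rather than collapsing under $f$.

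The main obstacle, I anticipate, will be controlling the image $\left(A_0\right)^f$ precisely enough to conclude that the candidate subgroup is both normal in $G$ \emph{and} $f$-invariant simultaneously. Normality in $G=BwrX$ is delicate because $A$ is an infinite-rank module over $\mathbb{Z}X$, so "small" subgroups need not be normal; I would need the $X$-invariance coming from the commutator construction to secure normality, while the $f$-invariance must come from the assumed relation $\left(x^m\right)^f=1$ feeding back through $f$. Balancing these two requirements — so that the subgroup I land inside $\ker(f)$ is the same subgroup that is normal in $G$ — is the crux. Once such a nontrivial $f$-invariant normal subgroup is exhibited inside $\ker(f)$, it lies in the $f$-core, contradicting simplicity of $\left(H,f\right)$, and the proof is complete.
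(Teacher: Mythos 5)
Your overall strategy is the same as the paper's: assume $(x^{m})^{f}=1$ and exhibit a nontrivial subgroup of $\ker (f)$ that is normal in $G$, which contradicts simplicity (note that any subgroup of $\ker (f)$ normal in $G$ is automatically $f$-invariant, since its image is trivial — so part of the "balancing" you worry about is illusory; the only real tension is between lying in the domain $H$ and being normal in $G$). But your proposal stops exactly at the point you yourself call the crux, and neither of the candidate subgroups you name closes the argument. The subgroup $[A,x^{m}]$ is indeed normal in $G$ and nontrivial, but you cannot apply $f$ to its generators: for a general $a\in A$ the factors $a^{-1}$ and $a^{x^{m}}$ do not lie in $H$, so there is no computation placing $[A,x^{m}]$ inside $\ker (f)$. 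Conversely, $[A_{0},x^{m}]$ does lie in $\ker (f)$ (both factors of $a^{-1}a^{x^{m}}$ are in $H$, and $(a^{x^{m}})^{f}=(a^{f})^{(x^{m})^{f}}=a^{f}$), but it need not be normal in $G$: conjugating by $u\in X$ replaces $a\in A_{0}$ by $a^{u}$, which in general leaves $A_{0}$, because $A_{0}=A\cap H$ is not $G$-invariant.

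The missing idea is to pass to $m$-th powers. The subgroup $A^{m(x^{m}-1)}=[A^{m},x^{m}]$ resolves the tension because $A^{m}$ has both needed properties at once: $A^{m}\leq A_{0}\leq H$, so the kernel computation applies verbatim, namely $\left( a^{-mu}a^{mux^{m}}\right) ^{f}=\left( (a^{mu})^{f}\right) ^{-1}\left( (a^{mu})^{f}\right) ^{(x^{m})^{f}}=1$ for all $a\in A$, $u\in X$; and $A^{m}$ is invariant under conjugation by all of $G$ (it is the set of $m$-th powers of the normal abelian subgroup $A$), so, $X$ being abelian, $A^{m(x^{m}-1)}$ is normal in $G$. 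Nontriviality follows from the two facts you listed but never used: in the counterexample setting $B$ has infinite exponent, so $A^{m}\neq 1$, and $C_{A}(x^{m})=1$ (torsion-freeness of $X$ gives $x^{m}\neq 1$), so $b\mapsto b^{-1}b^{x^{m}}$ is injective on $A^{m}$. This yields the contradiction. Finally, Lemma 3 plays no role in this lemma — it is needed only in Lemma 5 — so your expectation that the displacement argument enters here was a misdirection.
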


\begin{proof}
\textit{Suppose that there exists a nontrivial }$x\in X$ \textit{such that }$%
x^{m}\in \ker \left( f\right) $\textit{. Then for each }$a\in A$\textit{\
and each }$u\in X$\textit{\ we have} 
\begin{equation*}
(a^{-mu}a^{mux^{m}})^{f}=(a^{-mu})^{f}(a^{mux^{m}})^{f}=(a^{-mu})^{f}\left(
(a^{mu})^{f}\right) ^{(x^{m})^{f}}
\end{equation*}
\begin{equation*}
=(a^{-mu})^{f}(a^{mu})^{f}=1\mathit{,}\text{.}
\end{equation*}
\textit{Since }$A^{m(x^{m}-1)}\leq \ker \left( f\right) $\textit{\ and is
normal in }$G$, \textit{we have a} \textit{contradiction.}
\end{proof}

\begin{lemma}
The subgroup $A^{m}$ is $f$-invariant.
\end{lemma}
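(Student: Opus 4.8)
The plan is to establish the sharp containment $\left( A^{m}\right) ^{f}\leq A^{m}$, upgrading the easy bound by a reduction to a single generating ``lamp'' followed by an averaging argument over $Y$. I first record the structural facts already available. Since $\left[ A:A_{0}\right] $ divides $m$ we have $A^{m}\leq A_{0}$, so $f$ is defined on $A^{m}$ and $\left( A^{m}\right) ^{f}\leq \left( A_{0}\right) ^{f}\leq A$; the task is to improve ``lies in $A$'' to ``lies in $A^{m}$''. Linearity of $f$ alone only gives the weaker containment $\left( A^{m}\right) ^{f}\leq A^{m/m_{1}}$, where $m_{1}=\left[ A:A_{0}\right] $ (one extracts the factor $m_{1}$ but cannot reinsert it), so closing this gap is the real content. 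I also record the intertwining identity: for $a\in A_{0}$ and $y\in Y$ we have $a^{y}\in A_{0}$ (as $Y$ normalises $A_{0}=A\cap H$) and $\left( a^{y}\right) ^{f}=\left( a^{f}\right) ^{y^{f}}$ with $y^{f}\in X$, while $y^{f}\neq 1$ whenever $y\neq 1$ by the preceding Lemma.

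Writing $A=\bigoplus_{x\in X}B_{x}$ and letting $\beta _{x}(b)$ denote the element supported at $x$ with value $b$, the group $A^{m}$ is generated by the lamps $\beta _{x}(b^{m})$, and each such lamp is a $Y$-conjugate of one of the finitely many base lamps $\beta _{x_{0}}(b^{m})$ with $x_{0}$ in a transversal of $Y$ in $X$. Because a base lamp lies in $A^{m}\leq A_{0}$, because $f$ carries its $Y$-conjugates to $y^{f}$-conjugates of its image, and because conjugation by $X$ preserves $A^{m}$, it suffices to prove $\left( \beta _{x_{0}}(b^{m})\right) ^{f}\in A^{m}$ for each base point. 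Fixing such a lamp, I set $v=\beta _{x_{0}}(b)$ and $w=\left( v^{m}\right) ^{f}$, so that the goal becomes $w\in A^{m}$.

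The heart of the argument is a commutator computation. For $y\in Y$ put $z=v^{y}v^{-1}\in A$; since $A$ is abelian, $z^{m}=\left( v^{m}\right) ^{y}\left( v^{m}\right) ^{-1}$, and applying $f$ (legitimate since $v^{m},\left( v^{m}\right) ^{y}\in A_{0}$) gives
\[
\left( z^{m}\right) ^{f}=w^{y^{f}}w^{-1}.
\]
If moreover $z\in A_{0}$, then the left side equals $\left( z^{f}\right) ^{m}\in A^{m}$, whence $w^{y^{f}}w^{-1}\in A^{m}$; equivalently, the image $\bar{w}$ of $w$ in $A/A^{m}$ satisfies $\bar{w}^{y^{f}}=\bar{w}$. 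I then want $y^{f}$ to translate the finite support of $\bar{w}$ off itself, for then $\bar{w}^{y^{f}}=\bar{w}$ forces $\bar{w}=0$, that is, $w\in A^{m}$.

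The main obstacle is to produce a single $y\in Y$ meeting both requirements at once: (i) $v^{y}v^{-1}\in A_{0}$, so that the commutator returns to the domain of $f$; and (ii) $y^{f}$ separates $\mathrm{supp}\,\bar{w}$ from itself, so that the translation Lemma applies. For (i) I note that $S=\{y\in Y:v^{y}v^{-1}\in A_{0}\}$ is a subgroup of $Y$ whose index is controlled by the finite group $A/A_{0}$ together with the $Y$-action on it; hence $S$ has finite index in the infinite group $Y$ and is therefore infinite. For (ii) I fix a nontrivial $y_{1}\in S$, note $y_{1}^{f}\neq 1$ by the preceding Lemma, and apply the translation Lemma with the element $y_{1}^{f}$ and the set $\mathrm{supp}\,\bar{w}$ to obtain $k$ with $y=y_{1}^{k}\in S$ and $\left( y_{1}^{f}\right) ^{k}\cdot \mathrm{supp}\,\bar{w}\cap \mathrm{supp}\,\bar{w}=\emptyset $. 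This $y$ serves both purposes, giving $\bar{w}=0$ and $w\in A^{m}$. Carrying this out for every base lamp and combining with the reduction above yields $\left( A^{m}\right) ^{f}\leq A^{m}$, as desired.
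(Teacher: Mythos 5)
Your argument is correct, and it reaches the lemma by a genuinely different device than the paper's, although both proofs run on the same ingredients: Lemma 3, Lemma 4, the containments $A^{m}\leq A_{0}$, $\left( A_{0}\right) ^{f}\leq A$, $Y^{f}\leq X$, and disjointness of supports in $A=\oplus _{x\in X}B^{x}$. The paper takes a general $a\in A$, conjugates by the powers $x^{mk}$ of one fixed nontrivial element (these lie in $Y$ since $X^{m}\leq Y$), and forces each conjugate back into $A_{0}$ by multiplying with a representative $c_{i_{k}}$ from a fixed finite transversal of $A_{0}$ in $A$; applying $f$ to $m$-th powers gives $\left( (a^{m})^{f}\right) ^{z^{k}}\left( c_{i_{k}}^{-m}\right) ^{f}\in A^{m}$ with $z=(x^{m})^{f}\neq 1$, and because the error terms $\left( c_{i_{k}}^{-m}\right) ^{f}$ have supports inside one fixed finite set while $\left( (a^{m})^{f}\right) ^{z^{k}}$ is translated by $z^{k}$, Lemma 3 together with $A^{m}=\oplus _{x\in X}B^{mx}$ isolates $\left( (a^{m})^{f}\right) ^{z^{k^{\prime }}}$ inside $A^{m}$. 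You avoid correction terms altogether: you restrict to the subgroup $S=\{y\in Y:v^{y}v^{-1}\in A_{0}\}$, where the displaced difference itself already lies in the domain of $f$, and you finish with a fixed-point argument in the quotient $A/A^{m}$ rather than an isolation argument in $A$. Your route is cleaner at the finish (no error terms to separate; in fact your computation works verbatim for every $v\in A$, so your opening reduction to lamps is redundant), but it transfers the work into the claim $[Y:S]<\infty $, which you assert rather than prove. The claim is true and you should write it out, since it is exactly what replaces the paper's transversal: setting $\sigma (y)=v^{y}v^{-1}A_{0}\in A/A_{0}$, a direct computation in the abelian group $A$ gives $\sigma (y_{1}y_{2}^{-1})=\left( \sigma (y_{1})\sigma (y_{2})^{-1}\right) ^{y_{2}^{-1}}$, so $\sigma (y_{1})=\sigma (y_{2})$ precisely when $y_{1}y_{2}^{-1}\in S$; hence the fibers of $\sigma $ are the right cosets of $S$ and $[Y:S]\leq \lbrack A:A_{0}]\leq m$. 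Similarly, record the one-line bridge when you invoke Lemma 4: for $1\neq y\in Y$ one has $(y^{f})^{m}=(y^{m})^{f}\neq 1$, hence $y^{f}\neq 1$ (Lemma 4 is stated for $(x^{m})^{f}$, not for $x^{f}$).
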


\begin{proof}
\textit{Let }$a\in A$\textit{. Consider }$T=\{c_{1},...,c_{r}\}$\textit{\ a
transversal of }$A_{0}$\textit{\ in }$A$\textit{, where }$r$\textit{\ is a
divisor of }$m$\textit{. Since }$A^{m}$\textit{\ is a subgroup of }$A_{0}$ 
\textit{\ and }$A=\oplus _{x\in X}B^{x}$\textit{, there exist }$%
x_{1},...,x_{t}$\textit{\ such that} 
\begin{equation*}
\left\langle \left( c_{i}^{m}\right) ^{f}|\,i=1,...,r\right\rangle \leq
B^{x_{1}}\oplus ...\oplus B^{x_{t}}
\end{equation*}%
\textit{and} $z_{1},...,z_{l}\in X$ such that 
\begin{equation*}
\left\langle (a^{m})^{f}\right\rangle \leq B^{z_{1}}\oplus ...\oplus
B^{z_{l}}\mathit{.}\text{.}
\end{equation*}%
\textit{Since }$[G:H]=m$\textit{, it follows that }$X^{m}\leq Y$. \textit{%
Fix a nontrivial }$x\in X$\textit{\ and let }$z=(x^{m})^{f}$.

\textit{For each integer }$k$, define $i_{k}\in \{1,...,r\}$\textit{\ such
that } 
\begin{equation*}
a^{x^{mk}}c_{i_{k}}^{-1}\in A_{0}\text{.}
\end{equation*}%
\textit{Then} 
\begin{equation*}
\left( (a^{x^{mk}}c_{i_{k}}^{-1})^{m}\right) ^{f}=\left(
(a^{x^{mk}}c_{i_{k}}^{-1})^{f}\right) ^{m}\in A^{m}\mathit{.}
\end{equation*}%
\textit{But }$(a^{x^{mk}}c_{i_{k}}^{-1})^{m}=a^{mx^{mk}}c_{i_{k}}^{-m}$ 
\textit{, thus} 
\begin{equation*}
\left( (a^{x^{mk}}c_{i_{k}}^{-1})^{m}\right) ^{f}=\left( a^{mx^{mk}}\right)
^{f}\left( c_{i_{k}}^{-m}\right) ^{f}=\left( a^{mf}\right)
^{z^{k}}c_{i_{k}}^{-mf}\text{.}
\end{equation*}%
\textit{By Lemma 4, }$z\not=1$\textit{. There exists by Lemma 3 an integer }$%
k^{\prime }$\textit{\ such that}

\begin{equation*}
\{z^{k^{\prime }}z_{1},...,z^{k^{\prime }}z_{l}\}\cap
\{x_{1},...,x_{t}\}=\emptyset \mathit{,}
\end{equation*}
\textit{and so,} 
\begin{equation*}
(B^{z^{k^{\prime }}z_{1}}\oplus ...\oplus B^{z^{k^{\prime }}z_{l}})\cap
(B^{x_{1}}\oplus ...\oplus B^{x_{t}})=1\text{.}
\end{equation*}
\textit{It follows that } 
\begin{equation*}
(a^{mf})^{z^{k^{\prime }}}c_{i_{k}}^{-mf}\in A^{m}\cap \lbrack
(B^{z^{k^{\prime }}z_{1}}\oplus ...\oplus B^{z^{k^{\prime }}z_{l}})\oplus
(B^{x_{1}}\oplus ...\oplus B^{x_{t}})]\text{. }
\end{equation*}
\textit{But as} 
\begin{equation*}
A^{m}=\oplus _{x\in X}B^{mx}\mathit{.}\text{,}
\end{equation*}
\textit{we conclude, }$(a^{mf})^{z^{k^{\prime }}}\in B^{mz^{k^{\prime
}}z_{1}}\oplus ...\oplus B^{mz^{k^{\prime }}z_{l}}\leq A^{m}$\textit{\ and }$%
a^{mf}\in A^{m}$\textit{. Hence, }$\left( A^{m}\right) ^{f}\leq A^{m}$.
\end{proof}

With this last lemma, the proof of Theorem 2 is finished.

\section{\textbf{Proof of Theorem 2 }}

Let $L$ be a self-similar\textit{\ abelian} group with respect to a simple
triple $\left( L,M,\phi \right) $; then $\phi $ is a monomorphism. Define $%
B=\sum_{i\geq 1}L_{i}$ , a direct sum of groups where $L_{i}=L$ for each $i$
. Let $X$ be cyclic group of order $2$ and $G=BwrX$, the wreath product of $%
B $ by $X$. Denote the normal closure of $B$ in $G$ by $A$; then, 
\begin{eqnarray*}
A &=&B^{X}=\left( L_{1}\oplus \sum_{i\geq 2}L_{i}\right) \times B \\
G &=&A\bullet X\text{.}
\end{eqnarray*}
Define the subgroup of $G$ 
\begin{equation*}
H=\left( M\oplus \sum_{i\geq 2}L_{i}\right) \times B\,\,\text{;}
\end{equation*}
an element of $H$ has the form 
\begin{equation*}
\beta =\left( \beta _{1},\beta _{2}\right)
\end{equation*}
where 
\begin{eqnarray*}
\beta _{i} &=&\left( \beta _{ij}\right) _{j\geq 1}\,\text{, }\beta _{ij}\in L
\\
\beta _{1} &=&\left( \beta _{1j}\right) _{j\geq 1},\text{ }\beta _{11}\in M 
\text{.}
\end{eqnarray*}
We note that $[G:H]$ is finite; indeed, 
\begin{equation*}
\lbrack A:H]=[L:M]\text{ and }[G:H]=2[L:M].
\end{equation*}
Define the maps 
\begin{equation*}
\phi _{1}^{\prime }:M\oplus \left( \sum_{i\geq 2}L_{i}\right) \rightarrow B, 
\text{ }\phi _{2}^{\prime }:B\rightarrow B
\end{equation*}
where for $\beta =(\beta _{1},\beta _{2})=\left( (\beta _{1j}),(\beta
_{2j})\right) _{j\geq 1},\beta _{11}\in M,$ 
\begin{equation*}
\phi _{1}^{\prime }:\beta _{1}\mapsto (\beta _{11}^{\phi }\beta _{12},\beta
_{13},...),\phi _{2}^{\prime }:\beta _{2}\mapsto (\beta _{22},\beta
_{21},\beta _{23},...)\text{.}
\end{equation*}
Since $L$ is abelian, $\phi _{1}^{\prime }$ is a homomorphism and clearly $%
\phi _{2}^{\prime }$ is a homomorphism as well.

Define the homomorphism 
\begin{equation*}
f:\left( M\oplus \sum_{i\geq 2}L_{i}\right) \times B\,\rightarrow A
\end{equation*}
by 
\begin{equation*}
f:\left( \beta _{1},\beta _{2}\right) \mapsto \left( \left( \beta
_{1}\right) ^{\phi _{1}^{\prime }},(\beta _{2})^{\phi _{2}^{\prime }}\right) 
\text{.}
\end{equation*}

Suppose by contradiction that $K$ is a nontrivial subgroup of $H$, normal in 
$G$ and $f$-invariant and let $\kappa =\left( \kappa _{1},\kappa _{2}\right) 
$ be a nontrivial element of $K$. Since $X$ permutes transitively the
indices of $\kappa _{i}$, we conclude $\kappa _{i1}\in M$ for $i=1,2$. Let $%
s_{i}$ (call it degree) be the maximum index of the nontrivial entries of $%
\kappa _{i}$; if $\kappa _{i}=0$ then write $s_{i}=0$ . Choose $\kappa $
with minimum $s_{1}+s_{2}$; we may assume $s_{1}$ be minimum among those $%
s_{i}$ $\not=0$. Since 
\begin{eqnarray*}
\kappa _{1} &=&\left( \kappa _{1j}\right) _{j\geq 1},\kappa _{11}\in M, \\
\left( \kappa _{1}\right) ^{\phi _{1}^{\prime }} &=&(\kappa _{11}^{\phi
}\kappa _{12},\kappa _{13},...)\text{,}
\end{eqnarray*}
we conclude $\left( \kappa _{1}\right) ^{\phi _{1}^{\prime }}$ has smaller
degree and therefore 
\begin{equation*}
\kappa _{1}=\left( \kappa _{11},e,e,e,...\right) \text{ or }\left( \kappa
_{11},\kappa _{11}^{-\phi },e,e,...\right) \text{. }
\end{equation*}
Suppose $\kappa _{1}=\left( \kappa _{11},e,e,e,...\right) $. As, $\kappa
=\left( \kappa _{1},\kappa _{2}\right) \in K$, we have $\kappa _{11}\in M$
and therefore 
\begin{eqnarray*}
\kappa ^{f} &=&\left( \left( \kappa _{1}\right) ^{\phi _{1}^{\prime
}},(\kappa _{2})^{\phi _{2}^{\prime }}\right) \in K, \\
\left( \kappa _{1}\right) ^{\phi ^{\prime }} &=&\left( \left( \kappa
_{11}\right) ^{\phi },e,e,e,...\right) , \\
\left( \kappa _{11}\right) ^{\phi } &\in &M;
\end{eqnarray*}
\begin{eqnarray*}
\kappa ^{f^{2}} &=&\left( \left( \kappa _{1}\right) ^{\left( \phi
_{1}^{\prime }\right) ^{2}},\kappa _{2}\right) , \\
\left( \kappa _{1}\right) ^{\left( \phi _{1}^{\prime }\right) ^{2}}
&=&\left( \left( \kappa _{11}\right) ^{\phi ^{2}},e,e,e,...\right) , \\
\left( \kappa _{11}\right) ^{\phi ^{2}} &\in &M\text{; etc.}
\end{eqnarray*}
By simplicity of $\phi $, this alternative is out. That is, 
\begin{equation*}
\kappa _{1}=\left( \kappa _{11},\kappa _{11}^{-\phi },e,e,...\right) \text{, 
} \kappa _{11}\in M\text{ .}
\end{equation*}
Therefore 
\begin{eqnarray*}
\kappa &=&\left( \kappa _{1},\kappa _{2}\right) , \\
\kappa ^{x} &=&\left( \kappa _{2},\kappa _{1}\right) ,\text{ }\kappa
^{xf}=\left( \left( \kappa _{2}\right) ^{\phi _{1}^{\prime }},\left( \kappa
_{11}^{-\phi },\kappa _{11},e,e,...\right) \right) , \\
\kappa ^{xfx} &=&\left( \left( \kappa _{11}^{-\phi },\kappa
_{11},e,e,...\right) ,\left( \kappa _{2}\right) ^{\phi _{1}^{\prime }}\right)
\end{eqnarray*}
are elements of $K$ and so, $\kappa _{11}^{\phi }\in M$. Furthermore,

\begin{eqnarray*}
\kappa ^{xfxf} &=&\left( \left( \kappa _{11}^{-\phi ^{2}}\kappa
_{11},e,e,...\right) ,\left( \kappa _{2}\right) ^{\phi _{1}^{\prime }\phi
_{2}^{\prime }}\right) , \\
\kappa _{11}^{-\phi ^{2}}\kappa _{11} &\in &M\text{;}
\end{eqnarray*}
successive applications of $f$ to $\kappa ^{xfx}$ produces $\kappa
_{11}^{\phi ^{i}}\in M$. Therefore, $\left\langle \kappa _{11}^{\phi
^{i}}\mid i\geq 0\right\rangle $ is a $\phi $-invariant subgroup of $M$; a
contradiction is reached.


\begin{thebibliography}{9}
\bibitem{GrigZuk} Grigorchuk, R., Zuk, A., The Lamplighter group as a group
generated by a 2-state automaton and its spectrum. Geometriae Dedicata 87,
(2001), 209-244.

\bibitem{NekSid} Nekrachevych,V., Sidki, S., Automorphisms of the binary
tree: state-closed subgroups and dynamics of 1/2 endomorphisms, In: Groups:
London Mathematical Lecture Notes Series, Vol. 311, (2004), 375-404.
Topological, Combinatorial and Arithmetic Aspects, Muller, T. W., (Ed.)

\bibitem{Nek} Nekrashevych, V., Self-similar groups, Math. Surveys and
Monographs, 117, American Mathematical Society, Providence, RI, 2005.

\bibitem{BruSid} Brunner, A., Sidki, S., Abelian state-closed subgroups of
automorphisms of m-ary trees, Groups, Geometry and Dynamics, vol.3, (2010)
455-472.

\bibitem{Sidki} Sidki, S., Tree wreathing applied to the generation of
groups by finite automata, International Journal of Algebra and Computation,
vol. 15. (2005), 1-8.

\bibitem{Robinson} Robinson, D. J. S., A course in the theory of groups, 2nd
ed., Grad. Texts in Math. 80, Springer Verlag, New York 1996.

\bibitem{BerSid} Berlatto, A., Sidki, S., Virtual endomorphisms of nilpotent
groups, Groups, Geometry and Dynamics, vol.1, (2007), 21-46.

\bibitem{DanSid} Dantas, A., Sidki, S., On state-closed representations of
restricted wreath product of groups of type $G_{p,d}=C_{p}wrC^{d}$,
arxiv:1505.05165, (2015).

\bibitem{Kapo} Kapovich, M., Arithmetic aspects of self-similar groups,
Groups Geometry and Dynamics 6 (2012), 737--754.
\end{thebibliography}
\end{document}